\numberwithin{equation}{section}
\theoremstyle{plain}
\newtheorem{theorem}{Theorem}[section]
\newtheorem{lemma}[theorem]{Lemma}
\newtheorem{corollary}[theorem]{Corollary}
\theoremstyle{definition}
\newtheorem{definition}[theorem]{Definition}
\newtheorem{example}[theorem]{Example}
\newtheorem{remark}[theorem]{Remark}
\newcommand{\sqln}[1]{\sqrt{#1 \ln #1}}
\newcommand{\zmin}{t_{\min}}
\newcommand\range[2]{\in\{#1,\dots,#2\}}
\newcommand{\ignore}[1]{}
\title{Concentration Inequalities for Random Sets}
\newcommand{\abs}[1]{\left|#1\right|}
\newcommand{\vcdim}{\text{VCDim}}
\newcommand{\rademacher}{\text{Rad}}
\newcommand{\uidim}{\text{UIDim}}
\author{
	Erel Segal-Halevi\footnote{erelsgl@gmail.com}
	~~and~
	Avinatan Hassidim\footnote{avinatanh@gmail.com}
	\\
	Bar-Ilan University, Ramat-Gan 5290002, Israel
}
\begin{document}
\maketitle
	
\begin{abstract}
In a large, possibly infinite population, each subject is colored red with probability $p$, independently of the others. Then, a finite sub-population is selected, possibly as a function of the coloring. The imbalance in the sub-population is defined as the difference between the number of reds in it and p times its size. This paper presents high-probability upper bounds (tail-bounds) on this imbalance. To present the upper bounds we define the *UI dimension* --- a new measure for the richness of a set-family. We present three simple rules for upper-bounding the UI dimension of a set-family. Our upper bounds on the imbalance in a sub-population depend only on the size of the sub-population and on the UI dimension of its support. We relate our results to known concepts from machine learning, particularly the VC dimension and Rademacher complexity.
\end{abstract}

\section{Introduction}
In many experimental processes, a sample is randomly taken from a population, a certain measurement is done on the sample and then applied to the entire population. The measurement done on the sample may not be entirely accurate due to the imbalance caused by the random sampling process. It is desired to have an upper bound on this imbalance. We model this process in the following way.

There is a population $O$ with a large number of subjects. We do not assume any bound on the size of $O$ (in particular, $O$ may be finite or infinite). The population is colored randomly: each subject is colored red with probability $p>0$ and remains uncolored with probability $1-p$. Then, a finite sub-population containing $t$ subjects is selected; denote this sub-population by $T$ (so $T\subseteq O$ and $|T|=t$). Denote by $T^R$ the set of red subjects in $T$. The difference $\abs{|T^R|-p\cdot |T|}$ denotes the imbalance caused by the randomization process. What is high-probability upper bound on this imbalance, as a function of $t$ and $p$?

There are two extreme cases:
\begin{itemize}
\item The easy case is when $T$ does not depend on the coloring, i.e, $T$ is a deterministic set defined before the coin-tosses. Then, the expected value of $|T^R|$ is $p\cdot|T|$. The difference between them can be bounded using standard concentration inequalities, e.g, with probability $1-O(1/t)$, the imbalance is $O(\sqln{t})$. See below Lemma \ref{lem:half-to-half}.
\item The hard case is when $T$ can depend on the coloring in an arbitrary way. Then, no non-trivial upper bound exists. For example, an adversary can select $T$ to contain $t$ red subjects. In this case, $T^R=T$ and the imbalance is $\Theta(t)$.
\end{itemize}
We are interested in an intermediate case, in which $T$ may depend on the coloring but only in a restricted way. As an example, suppose all the subjects in $O$ are placed on the real line, and $T$ must be an interval. $T$ may depend on the coloring, so it is a random variable and the standard concentration inequalities do not apply. However, the restriction to an interval means that an adversary cannot always select $t$ red subjects. Therefore we may hope to have a non-trivial upper bound on the imbalance $\abs{|T^R|-p\cdot |T|}$.
Our goal in this paper is to define a family of random sets and prove high-probability upper bounds on their imbalance.

Our motivating application comes from economics. Often, to determine a price for an item, a market-research is conducted in which a random sample of the buyer population is used to calculate an `optimal' price. Naturally, a price that is optimal in the sample might not be optimal in the global population. The optimality of the price depends on the set of buyers who want to buy the item in that price. Denote this set by $T$. Since the price depends on the sampling, it is a random variable, so $T$ is a random variable too. However, it is reasonable to assume that $T$ is an interval, since it includes all buyers whose valuation for the item is more than the price. The concentration bounds we develop in the present paper can be used to bound the imbalance in $T$.

\section{Deterministic-set Sampling Lemma}
As a baseline, we state a known lemma for deterministic sets. We prove it in two variants that will be useful later.

Below, the shorthand "w.p. $x$" means "with probability of at least $x$".
	
\begin{T^R-T/2}
\begin{lemma}[Deterministic-set Sampling Lemma] \label{lem:half-to-half}
	If $T$ is a deterministic set, then for every constant $r\geq 1$:
	\begin{align}
	\label{eq:zeq}
	\text{If $|T|=t$:}&& \text{w.p. } 1-{2\over t^{2 r^2}}: && \abs{|T^R|-{p\cdot|T|}} &< r \sqln{t}
	\\
	\label{eq:zmin}
	\text{If $|T|\geq \zmin$:}&& \text{w.p. } 1-{2\over (\zmin)^{2 r^2}}: && \abs{|T^R|-{p\cdot|T|}} &< r \sqln{|T|}
	\end{align}
\end{lemma}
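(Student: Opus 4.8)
The plan is to recognize $|T^R|$ as a sum of independent bounded random variables and apply a standard Chernoff--Hoeffding tail bound. Since $T$ is deterministic with $|T|=t$, each of its $t$ members is colored red independently with probability $p$, so $|T^R|=\sum_{o\in T}X_o$ where the $X_o$ are i.i.d.\ indicator variables in $\{0,1\}$ with $\mathbb{E}[X_o]=p$; in particular $\mathbb{E}\bigl[|T^R|\bigr]=p\cdot|T|=pt$. Hoeffding's inequality for sums of independent $[0,1]$-valued random variables then gives, for every $\lambda>0$,
\[
\Pr\bigl[\,\abs{|T^R|-pt}\geq\lambda\,\bigr]\;\leq\;2\exp\!\bigl(-2\lambda^2/t\bigr).
\]

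To obtain \eqref{eq:zeq} I would substitute $\lambda=r\sqln{t}$, so that $2\lambda^2/t=2r^2\ln t$ and the right-hand side becomes $2\exp(-2r^2\ln t)=2/t^{2r^2}$; taking complements yields the claimed strict inequality. For \eqref{eq:zmin}, write $s:=|T|\geq\zmin$ (still a fixed number, since $T$ is deterministic) and rerun the same computation with $t$ replaced by $s$, giving $\abs{|T^R|-ps}<r\sqln{s}$ with probability at least $1-2/s^{2r^2}$. Because $r\geq 1$, the map $s\mapsto s^{2r^2}$ is increasing on the positive reals, so $s\geq\zmin$ forces $2/s^{2r^2}\leq 2/(\zmin)^{2r^2}$, and the failure probability is therefore at most $2/(\zmin)^{2r^2}$, as required.

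There is essentially no obstacle here; the only points needing care are (i) invoking a concentration inequality whose constants produce exactly the exponent $2r^2$ --- Hoeffding's inequality does this cleanly, though a multiplicative/additive Chernoff bound on the binomial would also work after minor constant bookkeeping --- and (ii) the monotonicity step that weakens the $|T|$-dependent failure probability to a $\zmin$-dependent one. The hypothesis $r\geq 1$ is used (beyond that monotonicity remark) only to ensure the stated probability $1-2/t^{2r^2}$ is nontrivially positive for $t\geq 2$; the displayed inequalities themselves hold for every $r>0$.
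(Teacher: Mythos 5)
Your proof is correct and follows essentially the same route as the paper: both write $|T^R|$ as a sum of i.i.d.\ $\{0,1\}$-valued indicators with mean $p|T|$, apply Hoeffding's inequality to get the tail bound $2\exp(-2\lambda^2/|T|)$, and then substitute $\lambda=r\sqln{t}$ for \eqref{eq:zeq} and $\lambda=r\sqln{|T|}$ followed by the monotonicity step $2/|T|^{2r^2}\leq 2/(\zmin)^{2r^2}$ for \eqref{eq:zmin}. Your closing remarks on where $r\geq 1$ is (and is not) actually needed are a small but accurate addition beyond what the paper states.
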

\begin{proof}
		For every subject in $T$, define a random variable that equals $1$ if the subject is red and $0$ otherwise. These are i.i.d. random variables each of which is bounded in $[0,1]$. The sum of these variables is $|T^R|$ and the expectation of the sum is $p\cdot|T|$. For every $q\geq 0$, define the \emph{failure probability} as:
		\begin{align*}
		P_{fail,q}: = \Pr\left[\abs{|T^R|-{p\cdot|T|}}>q\right]
		\end{align*}
		By Hoeffding's inequality:
		\begin{align*}
		P_{fail,q} < 2\exp\left(\frac{-2 q^2}{\sum_T{(1-0)^2}}\right) \leq 2\exp\left(\frac{-2 q^2}{1\cdot |T|} \right)
		\end{align*}
		
		To get (\ref{eq:zeq}), let $q=r \sqln{t}$; then  $P_{fail,q}\leq 2/t^{2 r^2}$.
		
		To get (\ref{eq:zmin}), let $q=r\sqln{|T|}$; then $P_{fail,q} \leq 2/|T|^{2 r^2} \leq 2/(\zmin)^{2 r^2}$.
\end{proof}
\end{T^R-T/2}

\section{$d$-bounded random-sets}
If the set $T$ is not deterministic but depends on the outcomes of the random sampling, then Lemma \ref{lem:half-to-half} is not true without further restrictions. To handle such cases in a meaningful way we impose some structure on the possible values of the set $T$.

	\begin{definition}\label{def:random-set}
		A \textbf{random-set} is a random variable whose possible values are subsets of the global population $O$, and whose value depends on the random coloring process. The \emph{support} of a random-set is the collection of sets that it can equal with positive probability.
	\end{definition}
	\begin{definition}\label{def:d-bounded-set}
		Given an integer $d\geq 1$, a set family $H$ is called \textbf{$d$-bounded} if for every integer $j\geq 1$, the number of elements in $H$ with cardinality $j$ is at most $(j+1)^{d-1}$.
	\end{definition}
	\begin{definition}\label{def:d-bounded}
		Given an integer $d\geq 1$, a random-set $T$ is called \textbf{$d$-bounded} if its support is a $d$-bounded set-family.
	\end{definition}
	\begin{example}\label{exm:1-bounded}
		Let $O$ be a finite set of real numbers. Let $p$ be some real-valued random variable. Define $T = \{o\in O| o<p\}$. 
		$T$ is a random-set, since its value is a set that depends on a random variable. It is 1-bounded, because for every integer $j$, there is at most one possible outcome of $T$ with cardinality $j$ --- it is the set of $j$ smallest numbers in $O$. We will later generalize this example and show how to construct $d$-bounded random-sets. \qed
	\end{example}
	A $d$-bounded random-set is useful because of the following lemma.

\begin{T^R-T/2}
	\begin{lemma}[Random-set Sampling Lemma] \label{lem:half-to-half-rnd2}
		Let $T$ be a $d$-bounded random-set, for some integer $d\geq 1$. Then:
		\begin{align}
		\label{eq:zmin-rnd2}
		\text{w.p.  $1-4/\zmin$}:&&\text{If $|T|\geq \zmin$: } && \abs{|T^R|-{p\cdot|T|}}< d\cdot \sqln{|T|}
		\end{align}
	\end{lemma}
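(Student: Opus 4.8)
The plan is to reduce to the deterministic case (Lemma~\ref{lem:half-to-half}) via a union bound over the support of $T$. Write $H$ for the support of $T$; since $T$ is $d$-bounded, $H$ has at most $(j+1)^{d-1}$ members of each cardinality $j$, and in particular $H$ is countable. Let
\[
B := \bigl\{\, |T|\geq\zmin \ \text{and}\ \abs{|T^R|-p|T|}\geq d\cdot\sqln{|T|} \,\bigr\},
\]
the complement of the event asserted by the lemma; we must show $\Pr[B]\leq 4/\zmin$. If $\zmin\leq 4$ the asserted probability $1-4/\zmin$ is non-positive and the claim is vacuous, so I assume $\zmin>4$ (hence $\zmin\geq 2$) from now on.

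First I would establish the inclusion $B\subseteq\bigcup_{S\in H,\ |S|\geq\zmin}B_S$, where $B_S:=\{\,\abs{|S^R|-p|S|}\geq d\cdot\sqln{|S|}\,\}$: on $B$ the random set $T$ takes some value $S\in H$ with $|S|\geq\zmin$, and then $B_S$ occurs because $\abs{|S^R|-p|S|}=\abs{|T^R|-p|T|}\geq d\cdot\sqln{|T|}=d\cdot\sqln{|S|}$. Now each $S\in H$ is a fixed (deterministic) set, so applying Lemma~\ref{lem:half-to-half}, equation~\eqref{eq:zeq}, with $t=|S|$ and $r=d$ (legitimate as $d\geq 1$) gives $\Pr[B_S]\leq 2/|S|^{2d^2}$; this is just Hoeffding's inequality for the $|S|$ i.i.d.\ red-indicators of $S$ at deviation $d\cdot\sqln{|S|}$.

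Next I would assemble the union bound, grouping by cardinality and using $d$-boundedness:
\[
\Pr[B]\ \leq\ \sum_{S\in H,\ |S|\geq\zmin}\Pr[B_S]\ \leq\ \sum_{j\geq\zmin}(j+1)^{d-1}\cdot\frac{2}{j^{2d^2}}.
\]
Bounding $(j+1)^{d-1}\leq(2j)^{d-1}=2^{d-1}j^{d-1}$ makes the right-hand side at most $2^{d}\sum_{j\geq\zmin}j^{-(2d^2-d+1)}$; since $2d^2-d+1\geq d+1\geq 2$ for $d\geq 1$, a crude integral estimate of this tail gives $\Pr[B]\leq 2^{d+1}\big/\zmin^{\,2d^2-d}$. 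Finally one checks $2^{d+1}/\zmin^{\,2d^2-d}\leq 4/\zmin$, i.e.\ $2^{d-1}\leq\zmin^{\,2d^2-d-1}$, which holds because $\zmin\geq 2$ and $2d^2-d-1\geq d-1$ for every $d\geq 1$.

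I expect the only subtle point to be the tracking of constants in the last paragraph: the bound $(j+1)^{d-1}\leq 2^{d-1}j^{d-1}$ introduces a factor $2^{d-1}$ that naively would inflate the target constant from $4$ to $2^{d+1}$, and one must observe that the super-polynomial decay $j^{-(2d^2-d+1)}$ --- the exponent being comfortably above $2$ once $d\geq 1$ --- together with the innocuous hypothesis $\zmin\geq 2$ reabsorbs it, with an enormous margin when $d\geq 2$ and with exact equality when $d=1$. The reduction to a union bound over $H$ and the per-set application of Lemma~\ref{lem:half-to-half} are otherwise routine.
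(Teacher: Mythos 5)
Your proof is correct and follows essentially the same route as the paper: apply the Deterministic-set Sampling Lemma with $r=d$ to each set in the support, union-bound over the at most $(j+1)^{d-1}$ sets of each cardinality $j$, and then union-bound the tail over all $j\geq\zmin$. If anything, your tracking of constants in the final step is more careful than the paper's, which bounds $2(j+1)^{d-1}/j^{2d^2}$ by $4/j^2$ (a step that silently needs $j\geq 2$ when $d\geq 3$) and then writes $\sum_{j\geq\zmin}4/j^2\approx 4/\zmin$ with an approximation sign, whereas you absorb the factor $2^{d-1}$ into the super-quadratic decay and land exactly on $4/\zmin$.
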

	\begin{proof}
		Denote the support of $T$ by $H$ (it is a set-family). Denote the subset of $H$ containing sets of $j$ elements by $H^j$. Every set $h_{j}\in H^j$ is deterministic, so it is eligible to the Deterministic-set Sampling Lemma. Substituting $r=d$ in (\ref{eq:zeq}) gives, for every $j,h_j$:
		\begin{align}\label{eq:halving-wjt}
		&& \text{w.p. } 1-{2\over j^{2 d^2}}: &&  \abs{h_{j}^R - {p\cdot j}} < d\cdot \sqln{j}
		\end{align}
		Since $H$ is $d$-bounded, the number of different sets in $H^j$ is at most $(j+1)^{d-1}$. Hence, by the union bound,  the above statement is true for \emph{all} sets in $H^j$ w.p. $1-2 (j+1)^{d-1}/j^{2d^2} \geq 1-4/j^2$:
		\begin{align}\label{eq:halving-wj}
		&& \text{w.p. } 1-{4\over j^{2}}: && \forall h_j\in H^j: && \abs{h_{j}^R - {p\cdot j}} < d\cdot \sqln{j}
		\end{align}
		
		Using the union bound again, the probability that inequality (\ref{eq:halving-wj}) is false for at least one $j\geq \zmin$ is upper-bounded by:
		\begin{align*}
		\sum_{j=\zmin}^\infty\frac{4}{j^2} \approx \int_{x=\zmin}^\infty \frac{4}{x^2}dx = \frac{4}{\zmin}
		\end{align*}
		so w.p. $1-4/\zmin$, inequality (\ref{eq:halving-wj}) is true for all $h_j$ with $|h_j|\geq \zmin$. This implies (\ref{eq:zmin-rnd2}).
	\end{proof}
\end{T^R-T/2}

Motivated by the Random-set Sampling Lemma, we now present ways to construct $d$-bounded random-sets.
	
\section{The UI-dimension of random-sets}
The property of being $d$-bounded is not preserved under set operations such as union and intersection. Below, we define a stronger property that is preserved. We call it the \emph{UI dimension} since it is preserved under Union and Intersection.\footnote{It has some similarities to the \emph{VC dimension} from learning theory, but they are not identical. See below Section \ref{sec:learning}.} We need some notation. For a set-family $H$ and a set $h'$:
\begin{align*}
H\cap h' := \{h \cap h' | h\in H\}
\\
H\setminus h' := \{h \setminus  h' | h\in H\}
\end{align*}

\begin{definition}\label{def:d-dinensional-set}
	The \textbf{UI dimension} of a set-family $H$, denoted $\uidim(H)$,  is the smallest integer $d$ such that, for every set $h'$, the family $H\cap h'$ is $d$-bounded (as defined in Definition \ref{def:d-bounded}).
	
\end{definition}
\begin{definition}\label{def:d-dimensional}
	The \textbf{UI dimension} of a random-set $T$, denoted $\uidim(T)$, is the UI dimension of the support of $T$.
\end{definition}
Obviously, a random-set with UI-dimension $d$ is also $d$-bounded, so it is eligible for the Random-Set Sampling Lemma (\ref{lem:half-to-half-rnd2}).
	
Below we provide three rules for constructing random sets with a bounded UI dimension. The first one is the \emph{Containment-Order Rule}.

\begin{definition}\label{def:filtration}
	A set-family is called \textbf{ordered-by-containment} if the sets in the family can be indexed $\{h_1,h_2,\dots\}$ such that 		for all $i<j$: $h_i\subset h_j$.
\end{definition}

\begin{remark}
	In measure theory and stochastic processes theory, a set-family that is ordered-by-containment is called a \emph{filtration}.
\end{remark}

\begin{lemma}
	\label{lem:1-dimensional}
	Let $H$ be a family of finite sets. Then $H$ is ordered-by-containment, if-and-only-if its UI dimension is at most 1: $\uidim(H)\leq 1$.
\end{lemma}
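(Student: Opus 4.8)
The plan is to prove the two implications separately, first translating ``ordered-by-containment'' into the cleaner structural statement that $H$ is a chain under $\subseteq$. For a family of finite sets these are equivalent: if $H$ is totally ordered by $\subseteq$, then any two distinct members are related by \emph{strict} inclusion, hence (by finiteness) have distinct cardinalities, so the cardinalities of members of $H$ form a subset of $\mathbb{N}$; enumerating that subset in increasing order and taking the corresponding members gives an indexing $h_1, h_2, \dots$ with $h_i\subset h_j$ for $i<j$, as required by Definition \ref{def:filtration} (and conversely such an indexing makes $H$ a chain). I will reuse the elementary fact just noted: in a chain of finite sets, two distinct members always differ in cardinality.

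For the forward direction, assume $H$ is ordered-by-containment and fix an arbitrary set $h'$. Because the map $X\mapsto X\cap h'$ is monotone for $\subseteq$, the family $H\cap h'$ is again totally ordered by inclusion, so by the elementary fact its members have pairwise distinct cardinalities. Thus for every $j\geq 1$ there is at most one member of cardinality $j$ --- which is exactly the $d=1$ instance of Definition \ref{def:d-bounded-set} --- so $H\cap h'$ is $1$-bounded. Since $h'$ was arbitrary, $\uidim(H)\leq 1$ by Definition \ref{def:d-dinensional-set}.

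For the converse I would argue by contraposition. If $H$ is not ordered-by-containment then, by the reduction above, $H$ is not a chain, so it contains two $\subseteq$-incomparable sets $A$ and $B$. Pick $a\in A\setminus B$ and $b\in B\setminus A$ (both differences are nonempty since $A,B$ are incomparable), and put $h':=\{a,b\}$. Then $A\cap h'=\{a\}$ and $B\cap h'=\{b\}$ are two distinct singletons lying in $H\cap h'$, so $H\cap h'$ has two members of cardinality $1$, exceeding the bound $(1+1)^{0}=1$ allowed by $1$-boundedness. Hence $\uidim(H)\geq 2$, completing the contrapositive.

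The whole argument is short, and the only step needing any care is the reduction in the first paragraph: one must check that failing to be ``ordered-by-containment'' genuinely produces an incomparable pair, rather than merely ruling out one particular enumeration, and one should note in passing the degenerate cases (the empty family and one-element families), which are ordered-by-containment and for which every $H\cap h'$ has at most one member. Beyond that translation, each implication is a one-line monotonicity/cardinality observation.
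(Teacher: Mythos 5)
Your proof is correct and follows essentially the same route as the paper's: intersection with $h'$ preserves the containment order so finiteness forces distinct cardinalities (hence $1$-boundedness), and for the converse an incomparable pair $A,B$ yields the doubleton $h'=\{a,b\}$ whose intersection family contains two singletons. Your explicit reduction of ``ordered-by-containment'' to being a chain, and the remark that a chain of finite sets is automatically enumerable by increasing cardinality, is a small but welcome addition that the paper leaves implicit.
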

\begin{proof}
	$\Rightarrow:$ If $H$ is ordered-by-containment, then for every set $h'$, $H\cap h'$ is clearly also ordered-by-containment.
	In every set-family that is ordered-by-containment, $\forall i,j: i<j$: $h_i\subset h_j$. The sets $h_i,h_j$ are finite, so there can be at most a single $h_i$ with any given cardinality. Hence, for every $h'$, the family $H\cap h'$ is 1-bounded. Hence, $\uidim(H)$ is at most 1.
	
	$\Leftarrow:$ If $H$ is not ordered-by-containment, then it contains two sets, $h_1$ and $h_2$, that are incomparable in terms of containment (no one contains the other).  Then there are two elements,  $x_1 \in h_1\setminus h_2$  and  $x_2 \in h_2\setminus h_1$. 
	Now, consider the intersection of $H$ with the doubleton $h':=\{x_1,x_2\}$.   This is a set-family that contains two different sets with cardinality 1.   So $H\cap h'$ is not 1-bounded.  So $\uidim(H)>1$.
\end{proof}

\begin{corollary}[{Containment-Order Rule}]
	If the support of a random-set $T$ is ordered-by-containment, then $\uidim(T)\leq 1$.
\end{corollary}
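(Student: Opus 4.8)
The plan is to obtain this as an immediate consequence of Lemma \ref{lem:1-dimensional} together with the way the UI dimension of a random-set was defined. By Definition \ref{def:d-dimensional}, $\uidim(T)$ is, by definition, equal to $\uidim(H)$, where $H$ denotes the support of $T$. So the entire task reduces to showing $\uidim(H)\leq 1$ under the stated hypothesis that $H$ is ordered-by-containment.

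For this I would invoke the ``$\Rightarrow$'' direction of Lemma \ref{lem:1-dimensional}, which asserts exactly that an ordered-by-containment family of finite sets has UI dimension at most $1$. The one point to check before applying it is that $H$ really is a family of finite sets: this is built into the model, since a random-set takes as its values finite sub-populations $T$ (with $|T|=t$, as in the Introduction and Definition \ref{def:random-set}), so every member of the support $H$ is finite. With that in hand, Lemma \ref{lem:1-dimensional} gives $\uidim(H)\leq 1$, and combining with the first step yields $\uidim(T)\leq 1$, as claimed.

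There is essentially no obstacle here: the argument is a one-line unwinding of Definition \ref{def:d-dimensional} followed by a citation of Lemma \ref{lem:1-dimensional}. The only thing that could go wrong is forgetting the finiteness hypothesis of that lemma, which is why I would make the (trivial) observation above explicit rather than leave it implicit. If one preferred a self-contained proof not citing the lemma, one could instead argue directly: for any $h'$, the family $H\cap h'$ inherits the ordered-by-containment property from $H$, and a chain of finite sets under strict containment has at most one member of each cardinality, hence $H\cap h'$ is $1$-bounded for every $h'$, so $\uidim(H)\leq 1$; but this just re-proves the relevant half of Lemma \ref{lem:1-dimensional}.
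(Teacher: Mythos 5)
Your proof is correct and matches the paper's intent exactly: the corollary is stated without proof precisely because it is the immediate combination of Definition \ref{def:d-dimensional} with the ``$\Rightarrow$'' direction of Lemma \ref{lem:1-dimensional}, which is what you do. Your explicit check that the support consists of finite sets is a reasonable bit of care that the paper leaves implicit.
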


\begin{example}\label{exm:filtration}
	Consider a family $\{h_1,h_2,\dots\}$ where for every $j$, $h_j$ is the set of $j$ smallest elements in a finite population $O$ of real numbers. This family is clearly ordered-by-containment. By the Containment-Order Rule, the random-set of Example \ref{exm:1-bounded} has a UI dimension of at most 1.
	\qed
\end{example}

\section{Intersections and unions of random-sets}
	\begin{lemma} (Single-Set Intersection)
		\label{lem:intersect-1}
		For every set-family $H$ and every set $h'$:
		\begin{align*}
		\uidim(H\cap h') \leq \uidim(H)
		\end{align*}
	\end{lemma}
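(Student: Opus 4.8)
The plan is to unfold the definition of the UI dimension directly. Let $d := \uidim(H)$. By Definition~\ref{def:d-dinensional-set}, $d$ is the smallest integer such that for \emph{every} set $g$, the family $H \cap g$ is $d$-bounded. We must show that $\uidim(H \cap h') \leq d$, i.e. that for every set $g$, the family $(H \cap h') \cap g$ is $d$-bounded.

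First I would observe the key algebraic identity: for any two sets $h'$ and $g$, intersecting with $h'$ and then with $g$ is the same as intersecting once with $h' \cap g$. Concretely, $(H \cap h') \cap g = \{(h \cap h') \cap g \mid h \in H\} = \{h \cap (h' \cap g) \mid h \in H\} = H \cap (h' \cap g)$. This is just associativity of set intersection lifted to the family-level operation $H \cap (\cdot)$, but writing it out carefully is the one place a reader might want reassurance.

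Next, I would apply the defining property of $d = \uidim(H)$ to the particular set $h' \cap g$ (which is, after all, just \emph{some} set): this tells us $H \cap (h' \cap g)$ is $d$-bounded. Combining with the identity from the previous step, $(H \cap h') \cap g$ is $d$-bounded. Since $g$ was arbitrary, the family $H \cap h'$ has the property that intersecting it with any set yields a $d$-bounded family; hence $\uidim(H \cap h') \leq d = \uidim(H)$, where the inequality (rather than equality) comes from the fact that $\uidim(H \cap h')$ is defined as the \emph{smallest} such integer, which could be even smaller than $d$.

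I do not anticipate a genuine obstacle here; the statement is essentially a bookkeeping consequence of associativity together with the ``for every set'' quantifier in the definition of UI dimension. The only thing to be careful about is making the quantifier manipulation explicit — that a statement holding for all sets $g$ applied at $g = h' \cap g$ still holds — and to handle the trivial edge cases (e.g. $h' \cap g = \emptyset$, in which case $H \cap (h'\cap g) = \{\emptyset\}$, which is $d$-bounded for every $d \geq 1$ since it contains no set of cardinality $j \geq 1$). This is a one-paragraph proof once the identity $(H \cap h') \cap g = H \cap (h' \cap g)$ is on the table.
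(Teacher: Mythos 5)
Your proof is correct and follows the same route as the paper's: both rest on the identity $(H\cap h')\cap h'' = H\cap(h'\cap h'')$ followed by an application of the definition of $\uidim(H)$ to the set $h'\cap h''$. Your write-up is in fact cleaner, since the paper's version contains a typo (it sets $d=\uidim(H\cap h')$ where it means $d=\uidim(H)$) that you avoid.
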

	\begin{proof}
		Let $d = \uidim(H\cap h')$.
		We have to prove that for any set $h''$, the set-family $(H\cap h')\cap h''$ is $d$-bounded. Indeed, $(H\cap h')\cap h'' = H\cap(h'\cap h'')$, and because $\uidim(H)d$, by definition $H\cap(h'\cap h'')$ is $d$-bounded.
	\end{proof}
	\begin{corollary}\label{cor:intersect-1}
		The intersection of a random set with a deterministic set yields a random set with a weakly smaller UI-dimension.
	\end{corollary}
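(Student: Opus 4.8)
The plan is to obtain this corollary immediately from Lemma \ref{lem:intersect-1}, of which it is essentially just the random-set phrasing. Let $T$ be the given random-set and $h'$ the given deterministic set. By $T \cap h'$ we mean the random-set whose value under each coloring is the intersection of the (then-current) value of $T$ with $h'$; this is again a random-set in the sense of Definition \ref{def:random-set}, since its values are subsets of $O$ and it depends on the coloring only through $T$. Write $H$ for the support of $T$, so that $\uidim(T) = \uidim(H)$ by Definition \ref{def:d-dimensional}.

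The one substantive step is to pin down the support of $T \cap h'$. Every $h \in H$ occurs as a value of $T$ with positive probability, hence $h \cap h'$ occurs as a value of $T \cap h'$ with positive probability; conversely every value of $T \cap h'$ is of the form $h \cap h'$ for some $h \in H$. So the support of $T \cap h'$ is exactly the set-family $H \cap h' = \{\, h \cap h' : h \in H \,\}$, where any coincidences $h_1 \cap h' = h_2 \cap h'$ simply collapse in the set-builder notation. Consequently $\uidim(T \cap h') = \uidim(H \cap h')$.

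Finally, Lemma \ref{lem:intersect-1} yields $\uidim(H \cap h') \le \uidim(H)$. Chaining this with the two identifications above gives $\uidim(T \cap h') = \uidim(H \cap h') \le \uidim(H) = \uidim(T)$, which is precisely the assertion that intersecting a random-set with a deterministic set weakly decreases the UI dimension.

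I do not anticipate any real difficulty; the argument is pure bookkeeping on top of Lemma \ref{lem:intersect-1}. The only place where a little care is warranted is the support identification in the second paragraph: one should check that intersecting with the fixed set $h'$ introduces no new sets into the support and that collapsing coincident values does no harm. Both are fine --- a subfamily of a $d$-bounded family is again $d$-bounded, so the UI dimension is in any case monotone under passing to subfamilies, but here we in fact have an exact equality of supports and need no such monotonicity.
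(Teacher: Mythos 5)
Your proof is correct and takes essentially the same route as the paper, which states this corollary without further argument as an immediate consequence of Lemma \ref{lem:intersect-1}; your write-up merely makes explicit the (intended) identification of the support of $T\cap h'$ with the set-family $H\cap h'$. The extra care you take about collapsing coincident values, and the fallback observation that subfamilies of $d$-bounded families are $d$-bounded, are sound and only strengthen the bookkeeping.
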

	
	\begin{lemma}[{Union Rule}]
		\label{lem:union}
		For any sequence of random sets $T_1,\ldots,T_n$:
		\begin{align*}
		\uidim(T_1\cup \cdots \cup T_n) \leq \uidim(T_1)+\ldots+\uidim(T_n)
		\end{align*}
	\end{lemma}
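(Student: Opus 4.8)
The plan is to prove the inequality first for $n=2$ and then induct on $n$, using that a union of random-sets is again a random-set: the induction step applies the $n=2$ case to $T_1\cup\cdots\cup T_{n-1}$ and $T_n$, and the base case $n=1$ is immediate. For the $n=2$ case, write $d_i:=\uidim(T_i)$ and let $H_i$ be the support of $T_i$. Since the support of $T_1\cup T_2$ is a subfamily of $H:=\{h_1\cup h_2 : h_1\in H_1,\ h_2\in H_2\}$, and passing to a subfamily can only decrease the UI dimension (it only decreases the number of sets of each cardinality, and this count is unaffected by intersecting with a test set), it suffices to show $\uidim(H)\le d_1+d_2$.

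By Definition~\ref{def:d-dinensional-set} I would fix an arbitrary set $h'$ and show $H\cap h'$ is $(d_1+d_2)$-bounded; that is, for each $j\ge 1$ the number of sets in $H\cap h'$ of cardinality $j$ is at most $(j+1)^{d_1+d_2-1}$. Any $c\in H\cap h'$ has the form $c=(h_1\cup h_2)\cap h'=a\cup b$ with $a:=h_1\cap h'\in H_1\cap h'$ and $b:=h_2\cap h'\in H_2\cap h'$. The key step is to rewrite this as a \emph{disjoint} union $c=(a\setminus b)\sqcup b$: then the cardinalities add, so $|a\setminus b|=|c|-|b|$, and moreover $a\setminus b=h_1\cap(h'\setminus b)$ lies in $H_1\cap(h'\setminus b)$, which is $d_1$-bounded because $\uidim(H_1)\le d_1$. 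This is precisely where the full strength of the UI dimension is used --- closure under intersection with an arbitrary set, here $h'\setminus b$ --- rather than mere $d$-boundedness of $H_1$, which is exactly why $d$-boundedness alone fails to be preserved under unions.

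With this decomposition in hand, the map sending $c$ to the pair $(b,\,a\setminus b)$ is injective, since $c=(a\setminus b)\cup b$ is recovered from the pair. So the number of sets $c\in H\cap h'$ with $|c|=j$ is at most the number of such pairs. Here $b$ ranges over the sets of $H_2\cap h'$ of cardinality $j_2\le j$, and for each fixed $b$ the set $a\setminus b$ has cardinality exactly $j-j_2$ and lies in the $d_1$-bounded family $H_1\cap(h'\setminus b)$, giving at most $(j-j_2+1)^{d_1-1}\le(j+1)^{d_1-1}$ choices for it. Using that $H_2\cap h'$ is $d_2$-bounded, the number of admissible $b$ is at most $1+\sum_{j_2=1}^{j}(j_2+1)^{d_2-1}=\sum_{k=1}^{j+1}k^{d_2-1}\le(j+1)\cdot(j+1)^{d_2-1}=(j+1)^{d_2}$. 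Multiplying the two bounds gives $(j+1)^{d_2}\cdot(j+1)^{d_1-1}=(j+1)^{d_1+d_2-1}$, as required.

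I expect the counting in the last paragraph to be the main obstacle. The naive approach --- letting $a$ and $b$ range independently over all sets of cardinality $\le j$ --- overshoots the target by a factor of roughly $j+1$ and is useless. The disjoint-union encoding $c\mapsto(b,\,a\setminus b)$, which forces the cardinalities of the two pieces to sum to exactly $j$, is what removes that spurious factor; checking that the estimate $\sum_{k=1}^{j+1}k^{d_2-1}\le(j+1)^{d_2}$ lands on exactly the right power (rather than one power of $j+1$ too large) is the delicate bookkeeping.
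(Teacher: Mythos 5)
Your proof is correct and follows essentially the same route as the paper's: reduce to $n=2$, fix $h'$, decompose each set of $H\cap h'$ as a disjoint union of a piece from one family and a piece from the other family intersected with (the complement of) the first piece, and count pairs. The only differences are cosmetic --- the paper subtracts $h_1$ from the $H_2$-piece rather than the reverse, and it counts by ``choose the size, then the set'' (a factor $(j+1)\cdot(j+1)^{d_1-1}$) where you sum $\sum_{k=1}^{j+1}k^{d_2-1}\leq (j+1)^{d_2}$; both yield $(j+1)^{d_1+d_2-1}$.
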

	\begin{proof}
		It is sufficient to prove the lemma for $n=2$; the proof for any $n$ follows by induction.
		
		For each $i\in\{1,2\}$, let $H_i$ be the support of $T_i$ and let $d_i$ be its UI-dimension. Let $H$ be the support of the union $T_1\cup T_2$.
		Let $h'$ by any deterministic set. We have to prove that $H\cap h'$ is a $(d_1+d_2)$-bounded set-family.
		
		By Lemma \ref{lem:intersect-1}, $\uidim(H_i\cap h')\leq \uidim(H_i) = d_i$. Suppose we want to construct a set in the family $H\cap h'$, and we want it to have cardinality $j$. The choices we can make are as follows:
		\begin{itemize}
			\item First, we choose a set $h_1$ from the family $H_1\cap h'$. The size of $h_1$ must be between $0$ and $j$, so we have at most $(j+1)$ choices for the size of $h_1$ and then at most $(j+1)^{d_1-1}$ for the set $h_1$ itself (because $H_1\cap h'$ is $d_1$-bounded). 
			\item Next, we choose a set $h_2$ from the family $(H_2\cap h')\setminus h_1$. The size of $h_2$ must be exactly $j-|h_1|$. Since the family $(H_2\cap h')\setminus h_1$ is $d_2$-bounded, we have at most $(j+1)^{d_2-1}$ choices for $h_2$.
		\end{itemize}
		All in all, the number of choices is at most $(j+1)\cdot (j+1)^{d_1-1}\cdot (j+1)^{d_2-1} = (j+1)^{d_1+d_2-1}$.
		
		Hence the set-family $H\cap h'$ is $(d_1+d_2)$-bounded. Since this is true for every set $h'$, $\uidim(H)\leq d_1+d_2$. Hence, $\uidim(T_1\cup T_2)\leq \uidim(T_1)+\uidim(T_2)$.
	\end{proof}
	\begin{example}\label{exm:2-dimensional}
		Let $O$ be a finite set of points in the plane, $O\subseteq \mathbb{R}^2$.
		
		Let $T := \{(x,y)\in O| x>p_x \text{ or } y>p_y\}$, where $p_x$ and $p_y$ are random variables.
	 $T$ is a union of the two random-sets: $T_x := \{(x,y)|x>p_x\}$ and $T_y := \{(x,y)|y>p_y\}$, whose UI dimension is at most 1 by the Order-Containment Rule.  
	 Therefore, $\uidim(T)\leq 2$ by the Union Rule.\qed
	\end{example}
	
\begin{figure}
	\begin{center}
		\includegraphics[scale=.8]{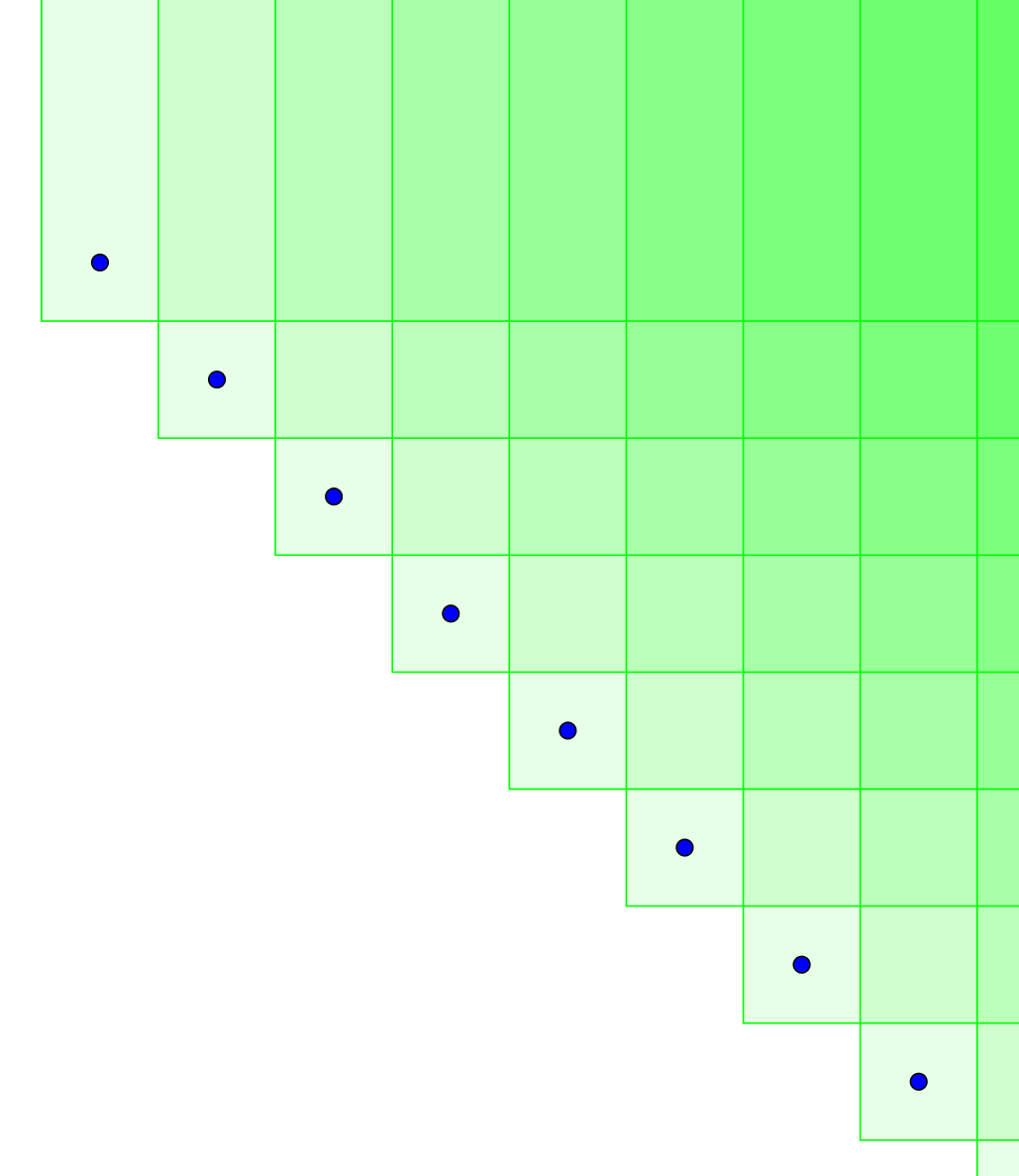}
	\end{center}
	\caption{
		\label{fig:intersection-unbounded} An intersection of two 1-dimensional random-sets may not be $d$-bounded.
	}
\end{figure}
The analogue of Lemma \ref{lem:union} for intersections of random-sets is not true.
\begin{example}\label{exm:2-dimensional-intersection}
Let $T = \{(x,y)\in O| x>p_x \text{ and } y>p_y\}$, where $p_x$ and $p_y$ are random variables. Then, $T = T_x\cap T_y$ are defined as in the previous example and $\uidim(T_x)=\uidim(T_y)=1$. However, $T$ is not $d$-bounded for any finite $d$. This is illustrated in Figure \ref{fig:intersection-unbounded}. The points represent the elements of $O$. Each quarter-plane represents a possible value of $T$. The cardinality of each such value is 1. Therefore, the number of sets of cardinality 1 in the support of $T$ can be as high as $|O|$ (the size of the global population). This is not bounded by $(1+1)^{d-1}$ for and constant $d$, since $|O|$ can be arbitrarily large. Similarly, for every $j\geq 1$, the number of sets of cardinality $j$ in the support of $T$ is not bounded by $(j+1)^{d-1}$ for any constant $d$.

Moreover, if $|O|$ is sufficiently large, with high probability there will be $t$ adjacent subjects colored red. An adversary can select a quarter-plane that contains all and only these red subjects. This quarter-plane will have the worst possible imbalance --- $t$.
\qed
\end{example}
Intersections of random-sets have a bounded UI dimension if one of the elements in the intersection has a bounded cardinality.
\begin{lemma}[{Intersection Rule}]		
	\label{lem:intersection}

For any sequence of random sets $T_0,T_1,\ldots,T_n$, if:
\begin{align*}
|T_0|<k \text{ w.p. 1,}
\end{align*}
then:
\begin{align*}
\uidim(T_0\cap \cdots \cap T_n) \leq (\uidim(T_0)+\ldots+\uidim(T_n))\cdot \lg{(k)}
\end{align*}
\end{lemma}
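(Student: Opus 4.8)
The plan is to unwind the definition of UI dimension and reduce to a counting estimate. Write $d_i:=\uidim(T_i)$ and $D:=(d_0+\cdots+d_n)\lg k$; fixing an arbitrary set $h'$, it suffices (by Definitions~\ref{def:d-dinensional-set} and~\ref{def:d-dimensional}) to show that the family $\mathrm{supp}(T_0\cap\cdots\cap T_n)\cap h'$ is $D$-bounded, that is, that for each $j\ge 1$ it contains at most $(j+1)^{D-1}$ sets of cardinality $j$. We may assume $k\ge 2$, since for $k=1$ the hypothesis forces $T_0=\emptyset$; and if $j\ge k$ there is nothing to prove, as every set in the intersection has fewer than $k$ elements. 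It is tempting to prove a two-set version $\uidim(T_0\cap T_1)\le(d_0+d_1)\lg k$ and iterate it, but iteration compounds the $\lg k$ factor into $(\lg k)^n$; instead I would use $|T_0|<k$ to confine every partial intersection that starts from $T_0$ inside a single set of at most $k-1$ elements.

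Concretely, every set in $\mathrm{supp}(T_0\cap\cdots\cap T_n)\cap h'$ has the form $h_0\cap h_1\cap\cdots\cap h_n\cap h'$ with $h_i\in\mathrm{supp}(T_i)$, and to it I would attach the nested chain $g_0:=h_0\cap h'$ and $g_i:=g_{i-1}\cap h_i$ for $i=1,\dots,n$. Then $g_0\supseteq g_1\supseteq\cdots\supseteq g_n$, the last set $g_n$ equals the original set, $g_0\in\mathrm{supp}(T_0)\cap h'$ so $|g_0|\le k-1$, and $g_i\in\mathrm{supp}(T_i)\cap g_{i-1}$ for $i\ge 1$. Hence the number of cardinality-$j$ sets in the family is at most the number of such chains with $|g_n|=j$. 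By the definition of UI dimension (see also Lemma~\ref{lem:intersect-1}), $\mathrm{supp}(T_0)\cap h'$ and each $\mathrm{supp}(T_i)\cap g_{i-1}$ are $d_i$-bounded, so once the earlier links are fixed a link of cardinality $m$ admits at most $(m+1)^{d_i-1}$ choices. Since the chain is nested and ends at size $j$, every link has cardinality in $\{j,\dots,k-1\}$; pinning $|g_n|=j$ and summing over the cardinalities of the other links gives
\[
\#\{\text{chains}\}\ \le\ \Bigl(\,\prod_{i=0}^{n-1}\ \sum_{m=j}^{k-1}(m+1)^{d_i-1}\,\Bigr)(j+1)^{d_n-1}\ \le\ k^{\,d_0+\cdots+d_{n-1}}\,(j+1)^{d_n-1},
\]
because each inner sum has fewer than $k$ terms, each at most $k^{d_i-1}$.

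It then remains to check the inequality $k^{d_0+\cdots+d_{n-1}}(j+1)^{d_n-1}\le(j+1)^{D-1}$. This is the one delicate step, and it rests on the identity $(j+1)^{\lg k}=k^{\lg(j+1)}$: rewriting $(j+1)^{D-1}=(j+1)^{-1}k^{(d_0+\cdots+d_n)\lg(j+1)}$ and taking base-$2$ logarithms, the claim reduces to $(d_0+\cdots+d_{n-1})\lg k+d_n\lg(j+1)\le(d_0+\cdots+d_n)\lg(j+1)\lg k$, which holds since $\lg k\ge1$ (as $k\ge2$) and $\lg(j+1)\ge1$ (as $j\ge1$). I expect the main obstacle to be organizing the chain count into exactly the shape $k^{d_0+\cdots+d_{n-1}}(j+1)^{d_n-1}$ — keeping the lone $(j+1)$-power factor, which records that the last link has size exactly $j$, apart from the $k$-power factors, which record that the remaining links stay inside the small set — so that this last logarithmic inequality closes with the advertised bound $(d_0+\cdots+d_n)\lg k$ and not something larger.
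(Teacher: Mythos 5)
Your proposal is correct and follows essentially the same route as the paper's proof: the same nested-chain decomposition $g_0\supseteq g_1\supseteq\cdots\supseteq g_n$ of a set in the support of the intersection, the same use of $|T_0|<k$ to bound every intermediate cardinality by $k-1$ and hence the number of choices at step $i$ by roughly $k^{d_i}$, and the same conversion of the resulting $k$-power into a $(j+1)$-power via $\lg k\ge 1$ and $\lg(j+1)\ge 1$. The only (harmless) difference is bookkeeping: you keep the final factor as $(j+1)^{d_n-1}$ while the paper bounds it by $k^{d_n-1}$ before converting, and you make explicit the edge cases $k\ge 2$ and $j<k$ that the paper leaves implicit.
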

\begin{proof}
	For each $i\range{0}{n}$ let $H_i$ be the support of $T_i$ and let $d_i$ be its dimension. Let $H$ be the support of the intersection $T_0\cap \cdots \cap T_n$. Let $h'$ be any set. We have to prove that the set-family $H\cap h'$ is $((d_0+\ldots+d_n)\lg k)$-bounded.
	
	Every set $h\in H\cap h'$ with cardinality $j$ can be constructed as follows:
	\begin{itemize}
		\item Select a set $h_0\in H_{0}\cap h'$, having $j_0$ items.
		\item Select a set $h_1 \in (H_1\cap h_0)\cap h'$, having $j_1$ items;
		\item Select a set $h_2 \in ((H_2 \cap h_1) \cap h_0)\cap h'$, having $j_2$ items;	
		\item ... Select a set $h_{n} \in (H_{n}\cap ... \cap h_0)\cap h'$, having $j_{n}$ items, where $j_n=j$.
	\end{itemize}
	By assumption, $|T_0|<k$, so  $j_0\leq k-1$. Since $\uidim(H_0)=d_0$, given $j_0$, the number of choices for $h_0$ is at most $(j_0+1)^{d_0-1}\leq k^{d_0-1}$. Since there are at most $k$ choices for $j_0$, the total number of choices for $h_0$ is at most $k^{d_0}$.

	The set-families used in each of the following steps are intersections of a set-family with UI-dimension $d_i$ with  constant sets. Hence by Lemma \ref{lem:intersect-1} their UI dimension is at most $d_i$.
	In step $i$, there are at most $k$ choices for the number $j_i$, except the last step where the choice is determined in advance ($j_n$ must be equal to $j$). For every selection of $j_i$, there are at most $(j_i+1)^{d_i-1}\leq k^{d_i-1}$ choices for $h_i$. The total number of choices for $h_i$ is thus at most $k^{d_i}$ for $i\leq n-1$, and at most $k^{d_n-1}$ for $i=n$.
	
	Multiplying all numbers of choices gives that the total number of ways to construct $h$ is at most $k^{d_0+d_1+\ldots+d_n-1} \leq (j+1)^{(d_0+d_1+\ldots+d_n)\lg k - 1}$. Therefore $H\cap h'$ is $((d_0+d_1+\ldots+d_n)\lg k)$-bounded.
	
	Since this is true for every set $h'$, the set-family $H$ has a UI dimension of at most $((d_0+d_1+\ldots+d_n)\lg k)$.
\end{proof}
\begin{remark}
	If the set $T_0$ is deterministic and $|T_0|=k$, then the set-family $H_0$ is a singleton and there is only one way to choose $h_0$. Since $1=k^0$, the proof is still valid if we take $d_0=0$, so the resulting random-set has a UI dimension of at most $((d_1+\ldots+d_n)\lg k)$.
	
	Effectively, in the Intersection Rule, a deterministic set is equivalent to a zero-dimensional random-set. The same is true in the Union Rule.
\end{remark}

\section{Related Work in Learning Theory}
\label{sec:learning}
While our work is presented in general probabilistic terms, it is related to results in learning theory. We are grateful to Aryeh Kontorovich for telling us about this relation.\footnote{In this MathOverflow thread: http://mathoverflow.net/a/257050/34461} 

\subsection{UI Dimension and VC Dimension}
Our UI dimension is related to the well-known Vapnik-Chervonenkis (VC) dimension from learning theory. Both dimensions measure the complexity/richness of a set-family $H$. Both are related to the set-families of the form  $H\cap h' := \{h\cap h' | h\in H\}$. But they are different:
\begin{itemize}
\item The VC dimension is the largest integer $D$ such that, there exists a set $h'$ with cardinality $D$ for which $|H\cap h'|=2^{|h'|}$. Equivalently, it is the smallest integer such that, for all sets $h'$ with cardinality more than $D$, $|H\cap h'|<2^{|h'|}$.
\item The UI dimension is the smallest integer $d$ such that, for every set $h'$, the set-family $H\cap h'$ is $d$-bounded, i.e, for every $j$, $H\cap h'$ contains at most $(j+1)^{d-1}$ sets of cardinality $j$.
\end{itemize}
One connection between these two dimensions is given by the following lemma:
\begin{lemma}
	\label{lem:ui-vc}
If the UI dimension of $H$ is finite, then its VC dimension is finite too.
\end{lemma}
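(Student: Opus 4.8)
The plan is to bound the VC dimension of $H$ explicitly in terms of $d:=\uidim(H)$, exploiting the tension between two facts: a shattered set forces the presence of \emph{exponentially} many sets of each intermediate cardinality, whereas $d$-boundedness caps this count by a \emph{polynomial}. So finiteness of $\uidim$ will immediately rule out shattering large sets.

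First I would unpack what it means for $H$ to shatter a set $h'$ with $|h'|=D$: by definition $|H\cap h'|=2^{|h'|}$, i.e. $H\cap h'$ is the entire power set $2^{h'}$. In particular, for each $j\in\{0,1,\dots,D\}$, the family $H\cap h'$ contains exactly $\binom{D}{j}$ distinct sets of cardinality $j$. Next, since $\uidim(H)=d$, Definition \ref{def:d-dinensional-set} tells us that $H\cap h'$ is $d$-bounded, so the number of its sets of cardinality $j$ is at most $(j+1)^{d-1}\le (D+1)^{d-1}$. Combining the two, every shattered set of size $D$ must satisfy $\binom{D}{j}\le (D+1)^{d-1}$ for all $j\le D$.

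Then I would choose $j$ to make $\binom{D}{j}$ as large as possible. Since $\sum_{j=0}^{D}\binom{D}{j}=2^{D}$ is a sum of only $D+1$ terms, at least one term satisfies $\binom{D}{j}\ge 2^{D}/(D+1)$ (equivalently one may just quote $\binom{D}{\lfloor D/2\rfloor}\ge 2^{D}/(D+1)$). Feeding this into the inequality from the previous step yields $2^{D}/(D+1)\le (D+1)^{d-1}$, that is, $2^{D}\le (D+1)^{d}$. The right-hand side is polynomial in $D$ while the left-hand side is exponential, so this can hold only for $D$ below some finite threshold $D_0(d)$ — any $D_0$ with $2^{D_0}>(D_0+1)^{d}$ works, and such a value exists (in fact $D_0(d)=O(d\log d)$). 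Hence no set of size exceeding $D_0(d)$ can be shattered by $H$, so $\vcdim(H)\le D_0(d)<\infty$.

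I do not expect a genuine obstacle here; the argument is essentially a counting inequality. The only points requiring a line of care are the elementary estimate that a sum of $D+1$ binomial coefficients equal to $2^{D}$ has a term of size at least $2^{D}/(D+1)$, and the remark that exponential growth eventually dominates polynomial growth, which is what makes the threshold $D_0(d)$ well-defined and finite. If a quantitatively cleaner statement is desired, one could record the explicit bound $\vcdim(H)\le D_0(d)$ with $D_0(d)$ the largest integer satisfying $2^{D_0}\le(D_0+1)^{d}$.
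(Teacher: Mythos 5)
Your proof is correct and rests on the same counting tension the paper exploits: $d$-boundedness caps the number of sets in $H\cap h'$ polynomially in $|h'|$, while shattering demands exponentially many, so large sets cannot be shattered. The only cosmetic difference is that you isolate the middle binomial coefficient $\binom{D}{\lfloor D/2\rfloor}\ge 2^D/(D+1)$ at a single cardinality level, whereas the paper sums the bound $(j+1)^{d-1}$ over all $j\le|h'|$ and compares the total $|H\cap h'|$ to $2^{|h'|}$; both yield the same conclusion, and yours has the small bonus of an explicit threshold $D_0(d)$.
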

\begin{proof}
Let $d=\uidim(H)$. Then for every set $h'$, $H\cap h'$ contains at most $(j+1)^{d-1}$ sets of cardinality $j$. So the cardinality of $H\cap h'$ is bounded as:
\begin{align*}
|H\cap h'| \leq \sum_{j=0}^{|h'|} (j+1)^{d-1}
\end{align*}
which grows polynomially with $|h'|$. Therefore, when $|h'|$ is sufficiently large, $|H\cap h'| < 2^{|h'|}$. So $\vcdim(H)$ is finite.
\end{proof}
The opposite of Lemma \ref{lem:ui-vc} is not true: a set may have an infinite UI dimension and a finite VC dimension.

\begin{example}
	\label{exm:vc-vs-ui}
Let $H$ be a set-family whose members are half-lines of the form:
\begin{align*}
\{(x,y_0)\in \mathbb{R}^2 | x > x_0\}
\end{align*}
where $x_0,y_0$ are real numbers. Then:
\begin{itemize}
	\item $\vcdim(H)\leq 1$. \emph{Proof:} let $h'$ be a set of two points in $\mathbb{R}^2$. If the two points in $h'$ have the same $y$ coordinate, then $H\cap h'$ does not contain a singleton with the point whose $x$ coordinate is smaller. If the two points in $h'$ have different $y$ coordinate, then $H\cap h'$ does not contain $h'$. In both cases, $|H\cap h'|\leq 3 < 2^2$.
	\item $\uidim(H)=\infty$. \emph{Proof:} let $h'$ be the line $y=8$. Then, $H\cap h'$ contains infinitely many singletons.\qed
\end{itemize}
\end{example}
The above example also shows that one of the directions of Lemma \ref{lem:1-dimensional} is not true for the VC dimension. Specifically, if $H$ is ordered-by-containment then $\vcdim(H)=1$, but the opposite is not true: the set-family in Example \ref{exm:vc-vs-ui} has $\vcdim(H)=1$ and it is not ordered by containment.

The above results show that, in a sense, the UI dimension is stronger than the VC dimension, and it is interesting that it can be upper-bounded by rules related to simple set operations.

\subsection{Random-Set Sampling Lemma and Rademacher Complexity}
For the purpose of the present section, let $p=1/2$. Then, the expression that is upper-bounded by our Random-Set Sampling Lemma (\ref{lem:half-to-half-rnd2}) is:
\begin{align*}
\abs{|T^R|-p\cdot |T|} = {1\over 2}\abs{|T^R|-|T^U|},
\end{align*}
where $T^U$ is the set of uncolored elements in $T$. By symmetry considerations, for every failure probability $p$, in order to prove an upper bound with probability $1-2p$ on $\abs{|T^R|-|T^U|}$, it is sufficient to prove an upper bound with probability $1-p$ on:
\begin{align*}
|T^R|-|T^U|
\end{align*}
The latter expression can be presented as follows. For every subject in the global population $i\in O$, let $\sigma_i$ be a random variable drawn from the \emph{Rademacher distribution} --- a variable that equals $1$ with probability 1/2 and $-1$ otherwise, such that the $\sigma_i$ are independent. Then:
\begin{align*}
|T^R|-|T^U| = \sum_{i\in T} \sigma_i
\end{align*}
\begin{indicatorfunctions}
This can be written using the \emph{indicator function} of $T$ --- a function $f_T$ on $O$ that equals 1 for $i\in T$ and 0 otherwise:
\begin{align*}
|T^R|-|T^U| = \sum_{i\in O} \sigma_i\cdot f_T(i)
\end{align*}
The support of $T$ can be represented by a family of such indicator functions, $F_T$. We are interested in bounding the worst-case imbalance, i.e, 
\begin{align*}
\sup_{f\in F_T}\abs{|T^R|-|T^U|} = \sup_{f\in F_T}\abs{\sum_{i\in O} \sigma_i\cdot f(i)}
\end{align*}
\end{indicatorfunctions}%
Let $H$ be the support of $T$. We can represent each set $h\in H$ as a binary vector with $|O|$ elements, where for subject $i\in O$, $h_i=1$ if $i\in h$ and $h_i=0$ if $i\notin h$. So $H$ is a set of vectors in $\mathbb{R}^{|O|}$. We are interested in bounding the  worst-case imbalance of $T$ over its entire support, which is,
\begin{align*}
\sup_{T} {(|T^R|-|T^U|)}
=
\sup_{h\in H}{\sum_{i\in O} \sigma_i\cdot h_i}
\end{align*}
By McDiarmid's inequality, the latter expression is concentrated about its mean, so to get a high-probability upper-bound, it is sufficient to bound its expected value (expectation taken over the $\sigma_i$):
\begin{align*}
E_{\sigma}\bigg[\sup_{h\in H}{\sum_{i\in O} \sigma_i\cdot h_i}\bigg]
\end{align*}
The latter expression is equal, up to normalization, to the \emph{Rademacher complexity} of the set $H$:
\begin{align*}
E_{\sigma}\bigg[\sup_{h\in H}{\sum_{i\in O} \sigma_i\cdot h_i}\bigg]
=
m\cdot \rademacher(H)
\end{align*}
where $\rademacher(H)$ is the Rademacher complexity of $H$, and $m=|O|=$ the number of subjects in the global population.

To summarize, in order to calculate an upper bound on the imbalance $\abs{|T^R|-p\cdot |T|}$, it is sufficient to prove an upper bound on the Rademacher complexity of $H$ --- the support of $T$ --- where $H$ is considered as a set of binary vectors in $\{0,1\}^{m}$.

This raises the question of what known upper-bounds on $\rademacher(H)$ are relevant to the sets $H$ studied in the present paper. One of the common bounds is based on the VC dimension \cite[page 342]{ShalevShwartz2014Understanding}. If $\vcdim(H)=D$, then:
\begin{align*}
 \rademacher(H) &\leq \sqrt{{2 D \log(e m / D)\over m }}
\\
\implies  m\cdot \rademacher(H) &\leq \sqrt{m\cdot {2 D \log(e m / D)}}
\end{align*}
However, in our case $m$ --- the size of the global population $O$ --- is not bounded  (it might be infinite). A more relevant bound is the Massart Lemma \cite[pages 330]{ShalevShwartz2014Understanding}. It says that, if $A$ is a finite set set of $N$ vectors in $\mathbb{R}^m$ whose norm is at most $c$, then:
\begin{align*}
 \rademacher(A)&\leq {c\sqrt{2\cdot \log N} \over m}
\\
\implies  m\cdot \rademacher(A)&\leq c\sqrt{2\cdot \log N}
\end{align*}
In our case, the set $H$ is infinite, but we can look at finite subsets of $H$ that correspond to subsets $T$ with a bounded size. For example, in the proof of Lemma \ref{lem:half-to-half-rnd2} we defined $H^j$ as the set-family containing all the sets in $H$ with cardinality $j$. Equivalently, $H^j$ is the subset of all binary vectors in $H$ with exactly $j$ ones. If $H$ is $d$-bounded, the number of such vectors is at most $(j+1)^{d-1}$, and the norm of these vectors is $\sqrt{j}$, so:
\begin{align*}
& m\cdot \rademacher(H^j)\leq \sqrt{2\cdot j\cdot \log ((j+1)^{d-1})} 
=
\sqrt{2\cdot (d-1)\cdot j\cdot \log (j+1)} 
\end{align*}
The similarity of this expression to our Lemma \ref{lem:half-to-half-rnd2} hints that we could have proved this lemma in a different (possibly more complicated) way using Rademacher complexity and the Massart lemma.

\section{Acknowledgments}
We are grateful to Simcha Haber, Ron Peretz and Gidi Amir for probability-related discussions, and Aryeh Kontorovich and Ryan O'Donnell for learning-related discussions.

\bibliographystyle{apalike}
\bibliography{../erelsegal-halevi}

\end{document}